\newtheorem{mythm}{Theorem}[section]
\newtheorem{mydef}[mythm]{Definition}
\newtheorem{myex}[mythm]{Example}
\newtheorem{mycor}[mythm]{Corollary}
\newtheorem{mylem}[mythm]{Lemma}
\newtheorem{myrmk}[mythm]{Remark}
\newcommand{\R}{{\mathbb R}}
\newcommand{\p}{\partial}
\def\norm#1{\left\|#1\right\|}
\def\ba{\begin{align}}  
\def\ea{\end{align}}
\def\bas{\begin{align*}}  
\def\eas{\end{align*}}
\def\bi{\begin{itemize}} 
\def\ei{\end{itemize}}  
\def\bn{\begin{enumerate}}  
\def\en{\end{enumerate}}    
\def\bN{\begin{note}} 
\def\eN{\end{note}} 
\def\bl{\begin{mylabel}} 
\def\el{\end{mylabel}} 
\def\bc{\begin{cases}} 
\def\ec{\end{cases}}
\def\be{\begin{equation}} 
\def\ee{\end{equation}}
\def\bes{\begin{equation*}} 
\def\ees{\end{equation*}}
\def\hat#1{{\widehat {#1}}}
\title{Kakeya-Type Sets in Local Fields with Finite Residue Field}
\author{Robert Fraser \\
Department of Mathematics,\\
University of British Columbia,\\
Vancouver, BC,\\
V6T 1Z2
}
\begin{document}
\maketitle
\begin{abstract}
We present a construction of a measure-zero Kakeya-type set in a finite-dimensional space $K^d$ over a local field with finite residue field. The construction is an adaptation of the ideas appearing in \cite{S87} and \cite{W04}. The existence of measure-zero Kakeya-type sets over discrete valuation rings is also discussed, giving an alternative construction to the one presented in \cite{DH13} over $\mathbb{F}_q[[t]]$.
\end{abstract}
\section{Introduction and Background}
A subset $T$ of Euclidean $n$-dimensional space is called a \emph{Kakeya set} if it contains a line segment of unit length in every direction. The existence of Kakeya sets with Lebesgue measure zero is shown by Besicovitch in \cite{B28}. Since the publication of Besicovitch's paper, Kakeya sets and their properties have been important in the study of harmonic analysis. For example, the existence of measure-zero Kakeya sets is central in Fefferman's celebrated result \cite{F71} that the ball multiplier is not bounded on $L^p(\mathbb{R}^n)$ for any $n \geq 2$ and any $p \neq 2$. It is conjectured that a Kakeya set in $\mathbb{R}^n$ must have Hausdorff dimension $n$. The conjecture is proven for $n = 2$ by Davies in \cite{D71}, but remains open for $n \geq 3$. Wolff shows in \cite{W95} that a Kakeya set in $\mathbb{R}^n$ must have Hausdorff dimension at least $\frac{n + 2}{2}$. This has been improved upon in the paper \cite{KT02} by Katz and Tao, and better bounds for the Minkowski dimension of Kakeya sets in $\mathbb{R}^n$ for various $n$ are obtained by Katz and Tao in \cite{KT02}, {\L}aba and Tao in \cite{LT01}, and Katz, {\L}aba and Tao in \cite{KLT00}.

In \cite{B28}, Besicovitch constructs a measure-zero Kakeya set $T$ in the plane $\mathbb{R}^2$ using an explicit geometric construction. This construction implies the existence of measure zero Kakeya sets in $\mathbb{R}^n$ because the product space $T \times [0,1]^{n-2}$ will be a measure-zero Kakeya set in $\mathbb{R}^n$. This construction relies on translating a family of triangles in order to make the measure of the resulting set small.

A somewhat different measure-zero Kakeya set construction, which makes little direct reference to the geometry of Kakeya sets, is given by Sawyer in \cite{S87}. Sawyer observes that it is sufficient to construct a function $\phi$ with the property that the range of $\phi(a) - a x$ as a function of $a$ has measure zero for every real number $x$; this shows that the cross-sections of the set $\{(x,y) : y = \phi(a) - a x \text{ for some $a \in \mathbb{R}$} \}$ have measure zero. In the main body of the paper, Sawyer constructs a universal function $\phi$ such that for any continuously differentiable function $f : \mathbb{R} \to \mathbb{R}$, the range of $\phi - f$ has measure zero. In particular, the function $\phi$ does not depend on $f$. This function $\phi$ can be used to construct Kakeya-like sets: for any measurable function $f(a,x)$ that is continuously differentiable in the indexing variable $a$, the set $\{(x,y) : y = f(a,x) - \phi(a) \text{ for some $a \in \mathbb{R}$} \}$ has measure zero.

A natural question that can be asked is whether Sawyer's result can be generalized to include transformations other than translations. Consider a family of surfaces $f_{x,y} :\mathbb{R}^d \to \mathbb{R}^{n-d}$, where $x \in \mathbb{R}^p$ and $y \in \mathbb{R}^q$. It is reasonable to ask if we can construct a measure zero set $T$ such that for every $x$, there is a $y$ such that $T$ contains the surface $f_{x,y}$. Wisewell answers this question for sufficiently smooth families of curves in \cite{W04}, given some conditions on $n, p, q,$ and $d$. 

The study of non-Euclidean Kakeya sets, by comparison, is much newer. In \cite{W99}, Wolff poses a finite-field version of the Kakeya problem. Over the finite field $\mathbb{F}_{\ell}$ containing $\ell$ elements, a Kakeya set in the $n$-dimensional vector space $\mathbb{F}_{\ell}^n$ is a set that contains a line in every direction. Wolff asks whether such a set must have $\gtrsim \ell^n$ elements, where the implied constant depends on $n$ but not on $\ell$. 

This question is answered affirmatively by Dvir in his influential paper \cite{D09}. This result is extended to a family of maximal function results in the paper \cite{EOT10} by Ellenberg, Oberlin, and Tao.

In \cite{EOT10}, the authors ask whether there are measure-zero Kakeya sets in the module $\mathbb{F}_{\ell}[[t]]^n$ where $\mathbb{F}_{\ell}[[t]]$ is the ring of formal power series. The construction of a measure-zero Kakeya set in this setting is described by Dummit and Hablicsek in \cite{DH13}. 

Dummit and Hablicsek's solution to this problem involves finding a function $\phi$ for which the set $\{(x,y) : y = ax - \phi(a) \text{ for some $a \in \mathbb{F}_{\ell}[[t]]$} \}$ has Lebesgue measure zero, and using a symmetry argument to take care of the lines that are not of this form. Dummit and Habliscek construct their function $\phi$ using linear algebra.

Specifically, Dummit and Hablicsek define their function $\phi(a)$ in the following manner: Suppose that
\[a = a_0 + a_1 t + a_2 t^2 + \ldots. \]
Dummit and Hablicsek define
\[\phi(a) = a_0^* + a_1^* t + a_2^* t^2 + \ldots \]
where $a_j^* = 0$ if $j = 2^n - 2$ for some $n$ and $a_j^* = a_{j+1}$ otherwise. For a fixed $x$ and $y$, the equation $y = ax - \phi(a)$ gives rise to a linear system in the $a_j$ variables. Dummit and Hablicsek show that the set of pairs $(x,y)$ for which this system has a solution in the $a_j$ variables has measure zero. Unfortunately, this method will not work in, say, the $p$-adic integers $\mathbb{Z}_p$ because the carry terms destroy the linearity of the problem.

We will extend the results of \cite{DH13} by constructing Kakeya-like sets using a different function $\phi$ based on the constructions in \cite{S87} and \cite{W04} that can be used in place of the function $\phi$ in \cite{DH13}. Furthermore, this construction is valid over any discrete valuation ring with finite residue field, such as $\mathbb{F}_q[[t]],$ or its corresponding local field. The adaptation of Sawyer's function to this setting is similar in spirit to the construction of subgroups of arbitrary Hausdorff dimension in the $p$-adic integers by Abercrombie in \cite{A94}.

The statement of the main theorem will concern a class of functions I call ``very strongly differentiable" functions; the definition is given in Section 2, Definition \ref{VeryStronglyDiff}. We need this notion of differentiability in order to get quantitative estimates on the error of a linear approximation that are, at least locally, independent of the choice of where the linear approximation was taken.

\begin{mythm}\label{LFWisewellFunc}
Let $R$ be a discrete valuation ring with residue field $\mathbb{F}_{\ell}$, the finite field with $\ell$ elements. Let $K$ be the field of fractions of $R$. There is a continuous function $\phi : K^p \to K^q$ with the following property:

Let $f(x,y) : K^p \times K^q \to K^{n-d},$ where $p \leq n-d \leq q$, be a measurable function that is very strongly differentiable in the $x$ and $y$ variables on every compact subset of $K^p \times K^q$, and such that the Jacobian $\frac{\partial f}{\partial y}$ has full rank a.e. in $x$ and $y.$
Then the set
\[ \{ f(x,\phi(x)) : x \in K^p\} \]
has measure zero.

\end{mythm}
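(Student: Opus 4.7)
My approach adapts the Sawyer-Wisewell construction to the local field setting. By $\sigma$-additivity of Haar measure (and covering $K^p$ by countably many translates of $R^p$, where $R$ is the ring of integers and $\pi$ a uniformizer), it suffices to show that $\{f(x,\phi(x)) : x \in R^p\}$ has measure zero. I will construct $\phi : R^p \to R^q$ as a uniform limit of step functions $\phi_j$, each locally constant at scale $|\pi|^{N_j}$ for a rapidly increasing sequence $N_1 < N_2 < \cdots$. The refinement $\phi_j - \phi_{j-1}$, restricted to each ball of radius $|\pi|^{N_{j-1}}$, takes values in a sparse subset $E_j \subset |\pi|^{N_{j-1}} R^q$ whose structure is dictated by prescribed $\pi$-adic digit patterns, in the spirit of Abercrombie's Cantor-like $p$-adic subgroups. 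Continuity of the limit $\phi$ follows from the uniform bound $|\phi_j - \phi_{j-1}| \leq |\pi|^{N_{j-1}} \to 0$.

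For the measure bound, fix a compact neighborhood of $R^p \times \phi(R^p)$. Very strong differentiability of $f$ on this neighborhood provides a uniform first-order Taylor estimate
\[
|f(x,y) - f(x_0,y_0) - A(x_0,y_0)(x - x_0) - B(x_0,y_0)(y - y_0)| \leq C |\pi|^{2N}
\]
for all base points $(x_0,y_0)$ in the neighborhood and $(x,y) - (x_0,y_0) \in |\pi|^N R^{p+q}$, with $C$ independent of $(x_0,y_0)$. Applied to each ball $B_\alpha^{(j)}$ of radius $|\pi|^{N_j}$, the image $\{f(x,\phi(x)) : x \in B_\alpha^{(j)}\}$ lies, up to $O(|\pi|^{2N_j})$ error, in a translate of $A_\alpha \cdot |\pi|^{N_j} R^p + B_\alpha \cdot E_{j+1}$. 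By the full rank of $B_\alpha = \partial f / \partial y$, this is a union of roughly $|E_{j+1}|$ affine translates of a $p$-dimensional box, genuinely spread out in $K^{n-d}$. Summing over the $\ell^{p N_j}$ balls covering $R^p$ and choosing $|E_j|$ small enough relative to the growth of $N_j$, the total image measure tends to zero as $j \to \infty$.

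The principal obstacle is the critical case $p = n-d$, where naive Lipschitz bounds give only a total measure of $O(1)$ and the sparse structure of $\phi$ must be invoked seriously. In this regime, the relevant Taylor data at scale $|\pi|^N$ reduces to finitely many matrix pairs $(A,B) \pmod{\pi^N}$ (of cardinality $\ell^{N(n-d)(p+q)}$, since higher-order corrections are absorbed into the $|\pi|^{2N}$ error), and the construction of $\phi$ must simultaneously defeat every such pair. Calibrating the sequences $N_j$ and $|E_j|$ to achieve in tandem (i) continuity of $\phi$, (ii) the needed measure gain at each scale via cancellation between $A$- and $B$-contributions, and (iii) universality across every admissible $f$ is the technical heart of the argument, mirroring the parameter choices in Sawyer's original construction and in Wisewell's curve-family adaptation.
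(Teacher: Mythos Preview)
Your high-level covering strategy (reduce to $R^p$, Taylor-expand on each ball, sum) matches the paper, and you have correctly identified the critical case $p=n-d$ as the crux. But the mechanism you propose for constructing $\phi$ is not the right one, and as written it cannot close the argument.

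You build $\phi$ so that the increment $\phi_j-\phi_{j-1}$ lies in a fixed \emph{sparse} set $E_j$, in the spirit of Abercrombie's Cantor-type subgroups. In the critical case, however, the linearized image of a ball is (up to error) a translate of $A_\alpha\cdot\pi^{N_j}R^p + B_\alpha\cdot E_{j+1}$; when $p=n-d$ and $A_\alpha$ is invertible, the first summand already has measure $\sim \ell^{-N_j(n-d)}$, and adding translates from $B_\alpha E_{j+1}$ cannot shrink it. Summing over the $\ell^{pN_j}$ balls you are stuck at $O(1)$, exactly as you observe---and no choice of a \emph{fixed} sparse $E_{j+1}$ repairs this, because no single set can produce the cancellation $A_\alpha h + B_\alpha(\phi(x+h)-\phi(x))\approx 0$ for \emph{every} pair $(A_\alpha,B_\alpha)$ simultaneously.

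The paper's (and Sawyer's) device is different: the increment is not drawn from a sparse set of \emph{values} but is a prescribed \emph{slope} times a digit block. Concretely, $\phi(x)=\sum_k r_k(x)\,p_k(x)$, where $p_k(x)$ extracts the $k$-th block of $\pi$-adic digits of $x$ and $\{r_k\}$ is a \emph{dense} sequence in the space of locally constant $q\times p$ matrix-valued functions. Given $f$, one chooses $N$ so that $r_N$ uniformly approximates $-(\partial f/\partial y)^{-1}(\partial f/\partial x)\big|_{(x,\phi(x))}$; then the term $\frac{\partial f}{\partial y}\,r_N p_N + \frac{\partial f}{\partial x}\,p_N$ is forced to be small, the remaining tail is small by the quadratic growth of block lengths, and the image is covered by $\ell^{p\alpha(N)}$ balls of radius $\ell^{-\alpha(N)-A}$ in $K^{n-d}$. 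This slope-matching cancellation is the missing idea in your outline; Abercrombie-style sparsity plays no role.
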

As in \cite{S87} and \cite{W04}, we will use this function $\phi$ to construct measure-zero Kakeya-like sets containing certain transformations of $d$-dimensional surfaces. Specifically, for this function $\phi$, we have the following result:
\begin{mythm}\label{LFWisewellSet}
Let $f(x,y,w) : K^p \times K^q \times K^d \to K^{n-d}$ be a function such that $f(x, y, \cdot)$ is measurable for every $x$ and $y$ and satisfies the same differentiability properties in the $x$ and $y$ variables as in Theorem \ref{LFWisewellFunc}. Then the set
\[T  = \{ f(x,\phi(x),w) : x \in K^q, w \in K^d \}\]
has measure zero.

\end{mythm}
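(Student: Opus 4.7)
The plan is to reduce Theorem \ref{LFWisewellSet} to Theorem \ref{LFWisewellFunc} by a Tonelli argument. Following the discussion preceding the statement, I view $T$ as the Kakeya-type subset of $K^n = K^d \times K^{n-d}$ parametrized by $(x,w) \mapsto (w, f(x, \phi(x), w))$, so that the $w$-slice of $T$ is $T_w := \{f(x, \phi(x), w) : x \in K^p\} \subset K^{n-d}$. The goal is then to show that $T$ has Haar measure zero in $K^n$.

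First, I would fix $w_0 \in K^d$ and define $g_{w_0} : K^p \times K^q \to K^{n-d}$ by $g_{w_0}(x,y) = f(x, y, w_0)$. The hypotheses on $f$ imply that $g_{w_0}$ is very strongly differentiable in $(x,y)$ on every compact subset of $K^p \times K^q$. Since $\partial f/\partial y$ has full rank a.e.\ in $(x,y)$, a Tonelli argument on the measurable set where the Jacobian drops rank shows that $\partial g_{w_0}/\partial y$ has full rank a.e.\ in $(x,y)$ for almost every $w_0 \in K^d$. Applying Theorem \ref{LFWisewellFunc} to such a $g_{w_0}$ yields
\[ \mu_{n-d}(T_{w_0}) = \mu_{n-d}\bigl(\{g_{w_0}(x, \phi(x)) : x \in K^p\}\bigr) = 0. \]

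Next, I would check that $T$ is Lebesgue measurable. The map $\Psi : K^p \times K^d \to K^n$ given by $\Psi(x,w) = (w, f(x, \phi(x), w))$ is jointly measurable: $\phi$ is continuous (by Theorem \ref{LFWisewellFunc}), $f(\cdot, \cdot, w)$ is continuous in $(x,y)$ for every $w$, and $f(x,y,\cdot)$ is measurable for every $(x,y)$. Hence $T = \Psi(K^p \times K^d)$ is the continuous image of a $\sigma$-compact space, so is analytic and in particular Lebesgue measurable. Tonelli's theorem applied to $\chi_T$ then gives
\[ \mu_n(T) = \int_{K^d} \mu_{n-d}(T_w)\, d\mu_d(w) = 0, \]
since the integrand vanishes almost everywhere.

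The main obstacle, modest though it is, lies in the slice-wise verification that $g_{w_0}$ meets the hypotheses of Theorem \ref{LFWisewellFunc}. Passing the full-rank condition on $\partial f/\partial y$ to almost every $w_0$ is a direct Tonelli calculation on a measure-zero exceptional set, and the transfer of very strong differentiability (in the sense of Definition \ref{VeryStronglyDiff}) from $f$ to each slice $g_{w_0}$ is immediate from the definition because the hypothesis already regulates smoothness in $(x,y)$ only. Everything else is routine measure theory.
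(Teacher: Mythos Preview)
Your Tonelli reduction is exactly the paper's approach: slice by $w$, apply Theorem \ref{LFWisewellFunc} to $g_{w}(x,y)=f(x,y,w)$ for almost every $w$, and integrate. The divergence is entirely in the measurability step, and your version of that step has a gap.

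You argue (correctly, via a Carath\'eodory-type observation) that $\Psi(x,w)=(w,f(x,\phi(x),w))$ is \emph{jointly measurable}, and then conclude that $T=\Psi(K^p\times K^d)$ is ``the continuous image of a $\sigma$-compact space, so is analytic.'' That inference is unjustified: $\Psi$ is not continuous in $w$, only measurable, so $T$ is not exhibited as a continuous image of anything. If ``measurable'' meant Borel you could repair this by invoking the fact that the Borel image of a Polish space is analytic and hence universally measurable; but the hypothesis only requires Haar (i.e.\ Lebesgue-completed) measurability of $f(x,y,\cdot)$, and the image of a Polish space under a merely Lebesgue-measurable map need not be Lebesgue measurable. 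So as written the argument does not establish measurability of $T$.

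The paper avoids descriptive set theory altogether. It first reduces to $x\in R^p$ by writing $T$ as a countable union of translates, and then, using compactness of $R^p$ and continuity of $x\mapsto f(x,\phi(x),w)$, writes
\[
T'=\bigcap_{n\geq 1}\ \bigcup_{x\in \tilde R^p}\ \Bigl\{(w,z):\norm{f(x,\phi(x),w)-z}<\tfrac{1}{n}\Bigr\},
\]
where $\tilde R^p$ is a countable dense subset of $R^p$. Each inner set is measurable in $(w,z)$ simply because $f(x,\phi(x),\cdot)$ is measurable for the fixed $x$, and countable unions and intersections preserve measurability. This is more elementary than an analytic-set argument and, crucially, does not care whether the measurability hypothesis on $f(x,y,\cdot)$ is Borel or merely Haar.
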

\section{Preliminaries}

We will discuss some of the basic properties of the algebraic structures that will be used in the proof of Theorem \ref{LFWisewellFunc}.

A principal ideal domain $R$ is called a \emph{discrete valuation ring} if it has a unique prime ideal $\mathfrak{p}$. This prime ideal is always maximal, so the quotient $R / \mathfrak{p}$ is a field, called the \emph{residue field} of $R$. In this paper, we will consider only local fields where the residue field is finite. Two important examples of discrete valuation rings are the ring $\mathbb{F}_{\ell}[[t]]$ of formal power series over $\mathbb{F}_{\ell},$ where the prime ideal is the ideal generated by the element $t$, and the $p$-adic integers $\mathbb{Z}_p$, where the prime ideal is generated by the element $p$. 

Given a discrete valuation ring $R$ and its prime ideal $\mathfrak{p}$, select a set $S$ consisting of one representative from each coset of $R / \mathfrak{p}$ satisfying the condition that $0$ is in $S$. Let $t$ be a generator of the ideal $\mathfrak{p}$. Then each element of $R$ can be expressed uniquely in the form
\begin{equation}\label{TaylorSeries}
x =  x_0 + x_1 t + x_2 t^2 + x_3 t^3 + \ldots
\end{equation}
where $x_j \in S$ for every $j$. This representation is not canonical as it depends on a choice of set $S$. 

We can define a function $v$ on $R$ called the \emph{valuation} as follows. Given $x \in R$, if $x \notin \mathfrak{p},$ define $v(x) = 0$. For $x \in \mathfrak{p}$ such that $x \notin \mathfrak{p^2}$, define $v(x) = 1$. Similarly, define $v(x)$ to be the largest $k$ for which $x$ is in $\mathfrak{p}^k$ (taking $v(0) = \infty$). It is clear from this definition that, given $a$ and $b$ in $R$, the valuation of $a - b$ is the number of consecutive identical terms of $a$ and $b$ in the power series expansions of $a$ and $b$ with respect to any set $S$ of representatives of $R / \mathfrak{p}$ containing $0$.

We can use the valuation to define a norm on $R$. For $x \in R$ with residue field $\mathbb{F}_{\ell}$, we define a norm on $R$ by $|x|=\ell^{-v(x)}$. This norm is an \emph{ultrametric} norm in that, for any $x$ and $y$ we have the inequality $|x + y| \leq \max(|x|,|y|)$.

With respect to this norm, the space $R$ is compact. So $R$ is a compact abelian group, and we can put a finite Haar measure $| \cdot |$ on the space $R$. We can normalize the measure so that $|R| = 1$. By translation invariance, it follows that $\mathfrak{p}$ and its cosets have measure $\frac{1}{\ell}$, where $\ell$ is the number of elements in the residue field $\mathbb{F}_\ell$. Similarly, the cosets of $\mathfrak{p}^k$ can be seen to have measure $\frac{1}{\ell^k}$; this defines the Haar measure $| \cdot |$ on the cylinder $\sigma$-algebra generated by cosets of these ideals. 

Given any integral domain $R$, its \emph{field of fractions} $K$ is the smallest field containing $R$. The field of fractions $K$ of a discrete valuation ring $R$ is known as a \emph{local field}. Because discrete valuation rings are integrally closed, we can recover $R$ as the ring of integers of $K$. Two examples of local fields with finite residue field are given by the field of formal Laurent series $\mathbb{F}_{\ell}((t))$ over a finite field, and the $p$-adic numbers $\mathbb{Q}_p$, with residue field $\mathbb{F}_p$. 

Every element in a local field is either an element of its ring of integers $R$ or is an inverse of an element of $R$. Consider a set $S$ of representatives of the cosets of $R$ containing $0$, and let $\mathfrak{p}$ be the prime ideal of $R$. Let $t$ be a generator of the ideal $\mathfrak{p}$. Every element $x$ of $K$ can be expressed uniquely in the form
\begin{equation}\label{LaurentSeries}
x = \sum_{j \geq M} x_j t^j 
\end{equation} 
where $M$ is an integer depending on $x$, $x_M \neq 0$, and the $x_j$ are elements of $S$. We extend the valuation on $R$ to all of $K$ by defining the valuation of the above element to be $M$, and extend the absolute value on $R$ to all of $K$ by taking $|x| = \ell^{-v(x)}$ for any $x \in K$.  With this norm, the ring of integers $R$ is the closed unit ball centered at the origin in $K$, and the prime ideal $\mathfrak{p}$ of $R$ is the open unit ball centered at the origin in $K$. With respect to the metric induced by the norm, $K$ is complete, but unbounded, and therefore $K$ is not compact. However, $K$ is locally compact, and we can define a Haar measure on $K$ as well, although it won't be finite.  We can obtain the Haar measure on $K$ by extending the Haar measure on $R$ to all of $K$. The closed ball of radius $\ell$ around the origin, consisting of all elements of $K$ with valuation at least $-1$, is a disjoint union of $\ell$ translated copies of $R$, so it has measure $\ell$. Similarly, the closed unit ball around the origin with radius $\ell^j$ has measure $\ell^j$ for any $j$. 

In this paper, we will be working over the finite-dimensional space $K^n$. We give this vector space the norm
\[ || (x_1, \ldots, x_n) || = \max(|x_1| , |x_2|, \ldots, |x_n|).\]
Similarly, any matrix norms in this paper will refer to the maximum of the absolute values of the entries. 
\begin{myrmk}\label{Convergence}
With respect to this norm, a series converges if and only if the norms of the terms go to zero. This property holds in any ultrametric abelian group. See \cite{R00}, Chapter 2, Section 1.2 for details.
\end{myrmk}
We will need a notion of differentiability over $K_n$. As in $\R^n$, we define a function $f$ on $K^n$ to be differentiable if there exists a linear map $f^{\prime}(x)$ so that
\[ \norm{f(x + h) - f(x) - f^{\prime}(x) h} = o_x(||h||). \]
We call $f$ continuously differentiable if the function $f^{\prime}$ is continuous in the norm topology on $K^n$. However, when dealing with functions on vector spaces over non-archimedean local fields, continuous differentiability doesn't have the same implications as it does over $\mathbb{R}^n$. Often, we need the stronger notion of ``strict" differentiability. Let $\Omega$ be a compact subset of $K^n$. We say that $f$ is strictly differentiable on $\Omega$ if
\[ \norm{f(x + h) - f(x) - f^{\prime}(x) h} = o(||h||), \]
where the bound on the differentiability does not depend on $x$. On $\mathbb{R}^n$, this condition is equivalent to continuous differentiability.

For the proof of theorem \ref{LFWisewellSet}, we will need a better quantitative bound on the error in the linear approximation than the one given by strict differentiability. 
\begin{mydef}\label{VeryStronglyDiff}
I will call a function $f : R^n \to R^m$ very strongly differentiable on a compact set $\Omega$ if
\begin{equation}\label{VeryStronglyDiffEq}
\norm{f(x + h) - f(x) - f^{\prime}(x) h} = o(||h||^{1 + \alpha}) 
\end{equation}
for some positive number $\alpha$, and any $x$, $x + h \in \Omega$.
\end{mydef}
This quantitative version of strict differentiability is going to be used to deal with the case of functions with large derivatives. Actually, for the purposes of our proof, essentially any quantitative bound on the error in the linear approximation could be used in place of the $o(||h||^{1+ \alpha})$, but the choice of $\phi$ would depend on the specific quantitative bound being used.
\begin{myex}
An example of a function that is strictly differentiable but not very strongly differentiable on the space $R = \mathbb{Z}_p$ is given by
\[f(z p^j) = p^{j + g(j)}\]
for a suitable function $g$, and any $z$ not divisible by $p$. 
\end{myex}
\begin{proof}
If we first suppose that $|h| < |x| = p^{-j}$, This function satisfies
\[ | f(x + h) - f(x)| = |p^{j + g(j) } - p^{j + g(j)}| = 0. \]

If instead we have $p^{-j} = |x| < |h| = p^{-k}$, where $g(k) > g(j)$, then 
\begin{IEEEeqnarray*}{rCl}
| f(x + h) - f(x)| & = & |p^{k + g(k)} - p^{j + g(j)}| \\
& = & |p^{k + g(k) }|\\
& = & p^{-k - g(k) } \\
& = & |h| |h|^{g(k)/k}.
\end{IEEEeqnarray*}
We can guarantee that this is $o(h)$ by picking $g(k)/k$ appropriately. In particular, we can select $g(k)/k$ to make $|h|^{g(k)/k} \approx \left| \frac{1}{\log(|h|)} \right|$ by selecting $g(k) \approx \frac{\log k + \log \log p}{\log p}$. For such $x$ and $h$, this is $o(|h|)$ but not $o(|h|^{1 + \alpha})$ for any positive number $\alpha$. Since the same estimate as above holds, but with inequality, for $|x| = |h|$, it follows that $f$ is strictly differentiable but not very strongly differentiable.
\end{proof}
It is easily seen that very strong differentiability implies a H\"older condition on $f^{\prime}$: Compare (\ref{VeryStronglyDiffEq}) to
\[ \norm{f(x) - f(x + h) - f^{\prime}(x + h) (-h)} = o(||h||^{1 + \alpha}). \]
This guarantees that both $f^{\prime}(x) h$ and $f^{\prime}(x + h) h$ are within $c ||h||^{1 + \alpha}$ of $f(x + h) - f(x)$ for some constant $c$. This, in turn, implies that $f^{\prime}(x)$ and $f^{\prime}(x+h)$ are within $c ||h||^{\alpha}$. Therefore, we can conclude
\begin{myrmk}\label{Holder}
The derivative of a very strongly differentiable function is H\"older continuous to order $\alpha$ for some value $\alpha$.
\end{myrmk} 
\section{Definition of $\phi$}

Although the function $\phi$ described in Theorem \ref{LFWisewellFunc} is to be defined on all of $K$, it is sufficient to construct such a function $\hat \phi$ with domain $R$, and extend $\hat \phi$ to all of $K$. More specifically, fix a set $S$ of representatives of the cosets of $\mathfrak{p}$ in $R$ containing zero, and a generator $t$ of $\mathfrak{p}$. Then when we write a componentwise Laurent series expansion in the spirit of (\ref{LaurentSeries}) of our input $x$, if the lowest-degree term in the expansion of $x$ has degree $M < 0$, we can define $\phi$ as follows:
\[ \phi \left( \sum_{j= M}^{\infty} t^j x_j \right) = \hat \phi \left( \sum_{j= 0}^{\infty} t^j x_j \right) \]
(Here, each $x_j$ is in the set $S^p$). We will refer to the term $t^j x_j$ in the above expansion as the \emph{degree $j$ term} of $x$. The \emph{lowest-degree term} is then $t^M x_M$. If the expansion is finite, i.e. all of the $x_j$ for $j > J$ are equal to zero, then I will refer to $t^J x_J$ has the \emph{highest-degree term} of $x$.

It is then clear that if $ \{ f(x, \hat \phi(x)) : x \in R^p \}$ has measure zero for every very strongly differentiable function $f : R^p \times K^q \to K^{n-d}$, then we're done because we can apply the property of $\hat \phi$ to the translates of $f$ as well. From now on, I will abuse notation and not distinguish between $\phi$ and $\hat \phi$.

To construct $\phi$, I will first need to construct a sequence of matrices $r_n$ with entries in the space $C(R)$ of continuous functions on $R$ so that $r_n$ is dense in the space $M_{q \times p}(C(R))$ of $q$-by-$p$ matrices with elements in $C(R)$. 

Define the set $S_k \subset K$ to be the set of elements of the field $K$ such that the degree of the lowest-degree term is at least $- \log k$ and the degree of the highest degree term is at most $k$. 

Let $\Omega_k$ be the set of functions $f : R \to S_k$ such that $f(x)$ depends only on the coefficient vectors $x_0, \ldots x_k$  of $x$. In other words, $\Omega_k$ is the space of functions from $R$ to $S_k$ that are constant on open balls of radius at most $\ell^{-k}$. Because $S_k$ is a finite set and $R$ is covered by finitely many open balls of radius $\ell^{-k}$, it follows that $\Omega_k$ is a finite set. 

The $\Omega_k$ satisfy the inclusions $\Omega_k \subset \Omega_{k + 1}$ for every $k$. Let $r_j$ be an enumeration of all the $q$-by-$p$ matrices whose entries are elements of $\Omega_1$, followed by an enumeration of matrices whose entries are elements of $\Omega_2$, and so on. Then every $q$-by-$p$ matrix that occurs in the sequence $\{r_j \}$ in fact occurs infinitely many times in the sequence $r_j$ because of the nesting of the sets $\Omega_k$.

In addition to the sequence of $q$-by-$p$ matrices $r_j$, we will also need a sequence of ``projections" $p_n$. For any $x \in R$, we can expand $x$ as a power series with respect to the set $S$ of representatives as in (\ref{TaylorSeries}).

Define
\begin{IEEEeqnarray*}{rCl}
p_0(x) & = & x_0 \\
p_1(x) & = & x_1 t + x_2 t^2 \\
p_2(x) & = & x_3 t^3 + x_4 t^4 + x_5 t^5 \\
\vdots & \vdots & \vdots \\
\end{IEEEeqnarray*}
Define $\alpha(j) = \frac{j(j + 1)}{2}.$ Then for any $j \in \mathbb{N}$ and any $x \in R$, we have that the valuation of $p_j(x)$ is at least $\alpha(j)$. By abuse of notation, for a vector $x = (x^{(1)}, x^{(2)}, \ldots, x^{(p)}) \in R^p$, we will use $p_j(x)$ to denote the componentwise application of $p_j$: 
\[p_j(x) : = (p_j(x^{(1)}), \ldots, p_j(x^{(p)}))^T.\]

We will now define the function $\phi : R^p \to R^q$ in terms of the projections $p_k$ and the matrices $r_k$. Define $\phi$ by
\begin{equation}\label{phidef}
\phi(x) = \sum_{k=0}^{\infty} r_k(x) p_k(x).
\end{equation}

This sum converges because the valuation of $p_k(x)$ is increasing quadratically and the valuation of $r_k$ is decreasing slower than logarithmically- therefore, the norms of the terms go to zero, and the sum must converge by Remark \ref{Convergence}. 

Furthermore, it can be seen that $\phi$ is continuous on $R^p$ (and the extension of $\phi$ is continuous on all of $K^p$). 

\begin{mylem}
The function $\phi$ is continuous on $R^p$.
\end{mylem}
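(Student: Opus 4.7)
The plan is to prove continuity by establishing uniform convergence of the series (\ref{phidef}) on $R^p$. Since each partial sum is continuous and uniform limits of continuous functions are continuous in the ultrametric setting (with essentially the same proof as in the archimedean case), this will suffice.

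First I would verify that each term $r_k(\cdot)\, p_k(\cdot)$ is continuous on $R^p$. Let $m(k)$ be such that the entries of $r_k$ lie in $\Omega_{m(k)}$. By the definition of $\Omega_{m(k)}$, each entry of $r_k$ depends only on the Taylor coefficients $x_0, \ldots, x_{m(k)}$ of the components of $x$, hence is locally constant (and in particular continuous) on balls of radius $\ell^{-m(k)}$. The function $p_k(x)$ is a polynomial in a finite block of Taylor coefficients of $x$, hence continuous. Thus every partial sum $\phi_N(x) := \sum_{k=0}^{N} r_k(x)\, p_k(x)$ is continuous on $R^p$.

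Next I would produce a uniform tail bound. By the definition of $p_k$, every nonzero term in the Laurent expansion of each component of $p_k(x)$ has degree at least $\alpha(k) = k(k+1)/2$, so $\|p_k(x)\| \leq \ell^{-\alpha(k)}$ for every $x \in R^p$. Meanwhile, every entry of $r_k$ takes values in $S_{m(k)}$, so has valuation at least $-\log m(k)$, giving $\|r_k(x)\| \leq \ell^{\log m(k)} = m(k)$. Because matrix-vector products obey the ultrametric estimate $\|Mv\| \leq \|M\|\,\|v\|$, we conclude
\[ \|r_k(x)\, p_k(x)\| \leq m(k)\, \ell^{-\alpha(k)} \]
uniformly in $x \in R^p$.

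To finish, I would note that the enumeration $\{r_k\}$ exhausts the finite collection of $q \times p$ matrices with entries in $\Omega_m$ before moving to level $m+1$, so $m(k)$ grows only doubly logarithmically in $k$; combined with the quadratic growth of $\alpha(k)$ this forces $m(k)\, \ell^{-\alpha(k)} \to 0$. By the ultrametric strong triangle inequality,
\[ \sup_{x \in R^p} \|\phi(x) - \phi_N(x)\| \leq \sup_{k > N} m(k)\, \ell^{-\alpha(k)} \longrightarrow 0 \quad \text{as } N \to \infty, \]
so $\phi_N \to \phi$ uniformly, and thus $\phi$ is continuous on $R^p$. The only quantitative content is the uniform tail estimate, which reduces to the elementary valuation bounds above; no serious obstacle is expected.
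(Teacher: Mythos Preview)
Your argument is correct: each summand is continuous (indeed locally constant), the valuation estimate $v(r_k(x)p_k(x)) \geq \alpha(k) - \log m(k)$ gives a uniform tail bound, and uniform limits of continuous functions are continuous. Two minor remarks: the identity $\ell^{\log m(k)} = m(k)$ presumes $\log$ is base $\ell$, and the doubly-logarithmic growth claim is stronger than needed---the enumeration guarantees $m(k) \leq k$, which already forces $\alpha(k) - \log m(k) \to \infty$.

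The paper takes a slightly more hands-on route: rather than invoking uniform convergence, it argues directly that if $x$ and $y$ agree in their first $\alpha(N)$ coefficients then the summands $r_j(x)p_j(x)$ and $r_j(y)p_j(y)$ are \emph{exactly equal} for $j < N$ (by local constancy), while the remaining summands all have valuation at least $A$ once $N$ is large. This exploits the specific structure that each term depends on only finitely many coefficients of $x$, giving an explicit modulus of continuity. Your uniform-convergence packaging is cleaner and more portable; the paper's version is more concrete and yields the explicit $\delta = \ell^{-\alpha(N(A))}$ for a given $A$. Both rest on the same tail estimate.
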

\begin{proof}
Let $A > 0$. We wish to show that if $|x - y|$ is sufficiently small, then $\phi(x) - \phi(y)$ will have valuation larger than $A$.

Consider the $k$th summand in (\ref{phidef}), where $k \geq 1$ (the summand $r_0(x) p_0(x)$ depends only on $x_0$). The value of $r_k(x)$ depends only on the values of $x_j$ as $j$ goes from $0$ to $\alpha(k+1) -1$ because $r_k$ is in some $\Omega_j$ for $j \leq k$, so the value of $r_k(x)$ depends only on $x_0, x_1, \ldots, x_j$, where $j \leq k \leq \alpha(k)$. The value of $p_k(x)$ depends only on the values of $x_j$ from $\alpha(k)$ to $\alpha(k+1)-1$ by definition. Therefore, the values of the first $k$ summands of $\phi(x),$ depend only on the first $\alpha(k+1)-1$ terms of $x$. Furthermore, the valuation of $r_n(x) p_n(x)$ is at least $\alpha(n) - \log n$ for each $n$. This goes to $\infty$ as $n \to \infty$.

Therefore, we can pick a number $N(A)$ so that for any $n > N(A)$, the valuation of $r_n(x) p_n(x)$ will be at least $A$. If the valuation of $x - y$ is at least $\alpha(N(A)),$ then $x_0, \ldots, x_{\alpha(N(A))- 1}$ agree. So $r_k(x) p_k(x) = r_k(y) p_k(y)$ for any $k < N(A)$. This implies that $\phi(x) - \phi(y)$ has valuation at least $A$, proving the lemma.
\end{proof}

The continuity of $\phi$ is sufficient to guarantee that $\phi(R^p)$ (which is equal to $\phi(K^p)$ by periodicity) is a compact set. In particular, $\phi(R^p)$ is bounded. Actually, we can directly find a quantitative bound on $|\phi(x)|$ in a much simpler way- each summand is in $R^q$ because the valuation of $r_j(x) p_j(x)$ is always nonnegative by the selection of the $r_j$ and the $p_j$. This tells us that the norm of $\phi$ is bounded above by $1$- in other words, the range of $\phi$ is contained in $R^q$.

\section{Proof of Theorem \ref{LFWisewellFunc}}

I will show that, if $\phi$ is the function from Section 3, then for any function $f(x,y)$ that is very strongly differentiable such that $\frac{\partial f}{\partial y}$ has full rank almost everywhere, the set
\[ \{ f(x, \phi(x)) : x \in R \} \]
has measure zero.

The trick to this proof, as in \cite{S87} and \cite{W04}, is to use the differentiability conditions to approximate $f(x, \phi(x))$ by its value at some finite list of ``landmarks" using a linear approximation. We will then select a very large value of $N$ for which $r_N$ is a good approximation to $ \left. \frac{\partial f}{ \partial y}^{-1} \frac{\partial f}{\partial x} \right|_{(x, \phi(x))}$ for all $x$. The range of $f(x, \phi(x))$ will be seen to be contained in small discs around these ``landmark" points, and the dimensionality conditions will guarantee that the volume of the discs decreases more quickly than the number of landmark points increases.

Fix a natural number $A > 0$. I will show that the range of $f(x, \phi(x))$ has measure less than or equal to $\ell^{-A}$. Because the argument works for every $A$, this shows that the set has measure zero.

For $m > 0$, Define $x^{(m)}$ to be
\[x^{(m)} := \sum_{j = 0}^{m - 1} p_j(x)\]
and define 
\[\phi^{(m)}(x) := \sum_{j=0}^{m - 1} r_j(x) p_j(x).\]

We will gradually decompose $f(x, \phi(x))$ into $6$ pieces $\mathrm{I}, \ldots, \mathrm{VI}$. Pieces $\mathrm{I}$ through $\mathrm{V}$ will be shown to be small, and piece $\mathrm{VI}$ will be shown to take on only a finite number of possible values.

The first step in the decomposition is to linearly approximate $f(x, y)$ in the $x$-coordinate.
\[ f(x, \phi(x)) =  \mathrm{I} + f \left(x^{(N)}, \phi(x) \right) + \left. \frac{\partial f}{\partial x} \right|_{(x, \phi(x))} \left(x - x^{(N)} \right),\]
where
\[ \mathrm{I} := f(x, \phi(x)) - f \left(x^{(N)}, \phi(x) \right) - \left. \frac{\partial f}{\partial x} \right|_{(x, \phi(x))} \left(x - x^{(N)} \right). \]
Next, we will approximate $f \left(x^{(N)}, \phi(x) \right)$ linearly in the $y$-coordinate. The point $f \left(x^{(N)}, \phi^{(N)}(x) \right)$ will serve as our landmark. After this step, we are left with
\[\mathrm{I} + \mathrm{II} + \mathrm{VI} + \left. \frac{\partial f}{\partial y}\right|_{(x^{(N)}, \phi(x) )} \left(\phi(x) - \phi^{(N)}(x) \right) + \left. \frac{\partial f}{\partial x} \right|_{(x, \phi(x))} \left(x - x^{(N)} \right),\]
where
\[ \mathrm{II} := f \left(x^{(N)}, \phi(x) \right) - f \left(x^{(N)}, \phi^{(N)}(x) \right) - \left. \frac{\partial f}{\partial y}\right|_{(x^{(N)}, \phi(x) )} \left( \phi(x) - \phi^{(N)}(x) \right), \]
and
\[ \mathrm{VI} := f \left(x^{(N)}, \phi^{(N)}(x) \right). \]
It's easier to control $\frac{\p f}{\p y}$ at the point $(x, \phi(x))$ than at $(x^{(N)}, \phi(x))$, so we modify our expression again:
\[\mathrm{I} + \mathrm{II} + \mathrm{III} + \mathrm{VI} + \left. \frac{\partial f}{\partial y}\right|_{(x, \phi(x))} \left(\phi(x) - \phi^{(N)}(x)\right)  + \left. \frac{\partial f}{\partial x} \right|_{(x, \phi(x))} \left(x - x^{(N)} \right), \] 
where
\[\mathrm{III} := \left. \frac{\partial f}{\partial y}\right|_{(x^{(N)}, \phi(x) )} \left(\phi(x) - \phi^{(N)}(x)\right) - \left. \frac{\partial f}{\partial y}\right|_{(x, \phi(x))} \left(\phi(x) - \phi^{(N)}(x)\right).\]
Finally, we will split the $N$th summand $r_N p_N(x)$ off of $\phi(x) - \phi^{(N)}(x)$ and the quantity $p_N(x)$ off of $x - x^{(N)}$. This allows us to write the sum in our desired form,
\[\mathrm{I} + \mathrm{II} + \mathrm{III} + \mathrm{IV} + \mathrm{V} + \mathrm{VI},\]
where 
\[\mathrm{IV} := \left. \frac{\p f}{\p y} \right|_{(x, \phi(x))} (\phi(x) - \phi^{(N+1)}(x)) + \left. \frac{ \p f}{\p x} \right|_{(x, \phi(x))} (x - x^{(N+1)}),\]
and
\[\mathrm{V} := \left. \frac{\p f}{\p y} \right|_{(x, \phi(x))} r_N(x) p_N(x) + \left. \frac{\p f}{\p x} \right|_{(x, \phi(x))} p_N(x).\]

I will now show that for an appropriate choice of $N$, each of the terms $\mathrm{I}, \mathrm{II},\mathrm{III},\mathrm{IV},$ and $\mathrm{V}$ is small.

\begin{mylem}\label{BoundI}
Given $A > 0$, there exists $N_{\mathrm{I}}$ such that for $N \geq N_{\mathrm{I}}$, term $\mathrm{I}$ has norm no more than $\ell^{- \alpha(N) - A}$.
\end{mylem}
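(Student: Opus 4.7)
The plan is to apply the very strong differentiability of $f$ in the $x$-variable directly on the compact set $R^p \times R^q$. First, I would observe that
$$x - x^{(N)} = \sum_{j \geq N} p_j(x),$$
and since the valuation of each summand $p_j(x)$ is at least $\alpha(j) \geq \alpha(N)$ for $j \geq N$, the ultrametric inequality yields $\norm{x - x^{(N)}} \leq \ell^{-\alpha(N)}$. Moreover, since $\phi(R^p) \subseteq R^q$ (as noted at the end of Section 3) and $x^{(N)} \in R^p$ by construction, both $(x, \phi(x))$ and $(x^{(N)}, \phi(x))$ lie in the compact set $R^p \times R^q$. This is the setting in which the very strong differentiability hypothesis has bite.

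Next, I would invoke the hypothesis that $f$ is very strongly differentiable in $x$ on $R^p \times R^q$ to produce a constant $\alpha > 0$ (depending only on $f$) such that
$$\norm{f(x + h, y) - f(x, y) - \left. \tfrac{\partial f}{\partial x}\right|_{(x, y)} h} = o\bigl(\norm{h}^{1 + \alpha}\bigr)$$
uniformly over $(x, y), (x + h, y) \in R^p \times R^q$ as $\norm{h} \to 0$. Applying this with $y = \phi(x)$ and $h = x^{(N)} - x$, the left-hand side is exactly $\norm{\mathrm{I}}$ (up to sign), so
$$\norm{\mathrm{I}} = o\bigl(\ell^{-(1 + \alpha) \alpha(N)}\bigr) \qquad \text{as } N \to \infty.$$

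It then remains to choose $N_{\mathrm{I}}$ large enough. Comparing exponents, the desired bound $\ell^{-\alpha(N) - A}$ follows as soon as $\alpha \cdot \alpha(N) \geq A$, with a little extra margin to absorb the hidden $o$-constant. Since $\alpha(N) = N(N+1)/2$ grows quadratically in $N$, this inequality certainly holds for all $N$ beyond some threshold $N_{\mathrm{I}}$, completing the argument. I do not anticipate any real obstacle here: the quadratic growth of $\alpha(N)$ easily beats the linear error we are trying to beat, and the key point—that the $o$-estimate is uniform in the base point $x$—is built into the definition of very strong differentiability on a compact set. The genuinely delicate terms will appear later, in particular when controlling $\mathrm{IV}$, where one must fight against the potentially large derivatives $\tfrac{\partial f}{\partial x}$ and $\tfrac{\partial f}{\partial y}$.
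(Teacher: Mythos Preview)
Your argument is correct and follows essentially the same line as the paper's: bound $\norm{x - x^{(N)}} \leq \ell^{-\alpha(N)}$, note both points lie in the compact set $R^p \times R^q$, and apply the differentiability hypothesis uniformly. The only difference is that the paper invokes merely \emph{strict} differentiability here (which already gives $\norm{\mathrm{I}} \leq \ell^{-A}\norm{x - x^{(N)}}$ once $\norm{x - x^{(N)}}$ is small enough), reserving the full strength of very strong differentiability for terms $\mathrm{II}$ and $\mathrm{III}$; your use of the stronger $o(\norm{h}^{1+\alpha})$ estimate is a harmless overkill.
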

\begin{proof}
Pick $N_{\mathrm{I}}$ so that we have, for every $x \in R^p,$ $y \in R^q,$ and $h \in R^n : \norm{h} < \ell^{-N_{\mathrm{I}}}$, that 
\[ \norm{f(x + h, y) - f(x,y) - \left. \frac{\p f}{\p x}\right|_{(x,y)} h} \leq \ell^{-A} \norm{h}.\]

We can do this by the strict differentiability assumption on $f$, and the fact that $R^p \times R^q$ is a compact set. Then if $N > N_{\mathrm{I}}$, $\norm{x - x^{(N)}}$ is no more than  $\ell^{- \alpha(N)} \leq \ell^{-\alpha(N_{\mathrm{I}})} \leq \ell^{- N_{\mathrm{I}}}$, so the above bound applies and we have that 
\[ \norm{\mathrm{I}} \leq \ell^{-A} \ell^{- \alpha(N)}, \]
as desired.
\end{proof}
\begin{mylem}\label{BoundII}
Given $A > 0$, there exists $N_{\mathrm{II}}$ such that for $N \geq N_{\mathrm{II}}$, term $\mathrm{II}$ has norm no more than $\ell^{- \alpha(N) - A}$. 
\end{mylem}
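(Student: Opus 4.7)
The strategy is to apply the very strong differentiability of $f$ in the $y$-variable at the landmark point $(x^{(N)}, \phi^{(N)}(x))$, and to absorb the mismatch between the derivative $\frac{\partial f}{\partial y}\Big|_{(x^{(N)}, \phi(x))}$ that actually appears in $\mathrm{II}$ and $\frac{\partial f}{\partial y}\Big|_{(x^{(N)}, \phi^{(N)}(x))}$ using the H\"older continuity of the derivative recorded in Remark \ref{Holder}.

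First, I would estimate the magnitude of
\[ h := \phi(x) - \phi^{(N)}(x) = \sum_{j \geq N} r_j(x) p_j(x). \]
Each entry of $r_j(x)$ has valuation at least $-\log_\ell j$ (by the definition of $S_k$) and each component of $p_j(x)$ has valuation at least $\alpha(j)$, so by the ultrametric inequality
\[ \|h\| \leq \max_{j \geq N} \|r_j(x) p_j(x)\| \leq \ell^{-\alpha(N) + \log_\ell N} = N \ell^{-\alpha(N)}, \]
which is the quantitative analogue of the step already used in Lemma \ref{BoundI}.

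Next, I would split $\mathrm{II}$ into two pieces, $\mathrm{II} = (a) + (b)$, where
\[ (a) := f(x^{(N)}, \phi(x)) - f(x^{(N)}, \phi^{(N)}(x)) - \tfrac{\partial f}{\partial y}\Big|_{(x^{(N)}, \phi^{(N)}(x))} h, \]
\[ (b) := \left[ \tfrac{\partial f}{\partial y}\Big|_{(x^{(N)}, \phi^{(N)}(x))} - \tfrac{\partial f}{\partial y}\Big|_{(x^{(N)}, \phi(x))} \right] h. \]
Piece $(a)$ is precisely the error in the linear approximation of $f(x^{(N)}, \cdot)$ about the landmark $\phi^{(N)}(x)$ evaluated at the shift $h$, so by very strong differentiability (Definition \ref{VeryStronglyDiff}) it is $o(\|h\|^{1+\alpha})$, uniformly on the compact set $R^p \times R^q$. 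Piece $(b)$ is controlled by $C \|h\|^\alpha \cdot \|h\| = C \|h\|^{1+\alpha}$ via the H\"older continuity of the derivative (Remark \ref{Holder}). Combining these, $\|\mathrm{II}\| \leq C' \|h\|^{1+\alpha} \leq C' N^{1+\alpha} \ell^{-(1+\alpha)\alpha(N)}$ for a constant $C'$ independent of $x$.

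Finally, since $\alpha(N) = N(N+1)/2$ grows quadratically in $N$ while $\log_\ell N$ grows only logarithmically, the desired inequality $C' N^{1+\alpha} \ell^{-(1+\alpha)\alpha(N)} \leq \ell^{-\alpha(N) - A}$ is equivalent to $\alpha \cdot \alpha(N) \geq (1+\alpha)\log_\ell N + \log_\ell C' + A$, which holds for all $N$ at least some threshold $N_{\mathrm{II}}$ depending only on $A$, $\alpha$, and $C'$. The main technical obstacle is the point mismatch in the evaluation of $\frac{\partial f}{\partial y}$: this is exactly where strict differentiability alone would be insufficient, and the quantitative improvement afforded by Definition \ref{VeryStronglyDiff} (through its H\"older corollary in Remark \ref{Holder}) is essential for absorbing the extra factor coming from moving the derivative between the two nearby base points.
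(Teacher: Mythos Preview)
Your proof is correct, but it takes a slightly longer route than the paper's and mislocates the reason very strong differentiability is needed.

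The paper applies very strong differentiability \emph{directly}, taking the base point to be $y_0=\phi(x)$ and the increment to be $k=\phi^{(N)}(x)-\phi(x)$. With this choice,
\[
\mathrm{II}=-\Bigl[f\bigl(x^{(N)},y_0+k\bigr)-f\bigl(x^{(N)},y_0\bigr)-\tfrac{\partial f}{\partial y}\Big|_{(x^{(N)},y_0)}k\Bigr],
\]
so the derivative in $\mathrm{II}$ is already at the base point and there is no ``point mismatch'' to repair. You instead chose $\phi^{(N)}(x)$ as the base point, which creates the mismatch and forces the extra H\"older correction $(b)$; this works, but it is an avoidable detour.

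Consequently, your diagnosis of why strict differentiability would not suffice is off. The genuine issue is quantitative: since $\|h\|\le N\,\ell^{-\alpha(N)}$ carries an extra factor of $N$, the linear-approximation error must beat $\|h\|$ by a factor of at least $N\ell^{A}$ in order to land below $\ell^{-\alpha(N)-A}$. Strict differentiability gives only $o(\|h\|)$ with no rate; the exponent $1+\alpha$ from Definition~\ref{VeryStronglyDiff} provides exactly the polynomial gain needed to absorb the $N$. The paper expresses this by recasting the very-strong-differentiability bound as $\ell^{-A-\log s}\|h\|$ with $\|h\|=\ell^{-s}$, and then checking $\alpha(N)-\log N>N$.
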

\begin{proof}

We will select $N_{\mathrm{II}}$ to take advantage of the very strong differentiability in the $y$-variable. Specifically, we select an $N_{\mathrm{II}}$ so that, $\alpha(N) - \log(N) > N$ for all $N \geq N_{\mathrm{II}}$, and so that for any $h$ with $\norm{h}  = \ell^{-s} < \ell^{-N_{\mathrm{II}}}$, and any $x \in R^n$, $y \in R^n$, we have that
\[\norm{f(x, y + h) - f(x,y) - \left. \frac{\partial f}{\partial y}\right| _{(x,y)} h} < \ell^{-A - \log(s)} \norm{h} \]
which is possible because of the very strong differentiability- $\norm{h}^{1 + \alpha}$ is $\ell^{-(1 + \alpha)s}$, which is smaller than $\ell^{-s - \log s - A}$ if $s$ is large enough.

For any $N$, we have that $\norm{\phi^{(N)}(x) - \phi(x)}$ is smaller than $\ell^{- \alpha(N) + \log(N)}.$ So if $N > N_{\mathrm{II}}$ then the error in the linear approximation will be smaller than $\ell^{-A -\alpha(N) + \log(N) - \log(\alpha(N) - \log(N))}$, which is in turn smaller than $\ell^{-A - \alpha(N)}$ because $\alpha(N) - \log(N)$ is larger than $N$. This proves the desired bound.
\end{proof}

\begin{mylem}\label{BoundIII}
Given $A > 0$, there exists an $N_{\mathrm{III}}$ such that for $N \geq N_{\mathrm{III}}$, term $\mathrm{III}$ has norm no more than $\ell^{- \alpha(N) - A}$.
\end{mylem}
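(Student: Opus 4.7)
The plan is to leverage the Hölder continuity of $\p f / \p y$ (Remark~\ref{Holder}) together with the quantitative tail bounds already built into the construction of $\phi$. Since $\mathrm{III}$ is the image of the vector $\phi(x) - \phi^{(N)}(x)$ under the difference of two Jacobians evaluated at points that agree in the $y$-slot and differ in the $x$-slot by exactly $x - x^{(N)}$, I would bound each of these two factors separately and combine them using the ultrametric submultiplicativity $\norm{Mv} \leq \norm{M}\,\norm{v}$.

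For the first factor, Remark~\ref{Holder} supplies an exponent $\alpha > 0$ and a constant $C$, both depending only on $f$ and on the compact set $R^p \times R^q$, such that
\[ \norm{\left.\frac{\p f}{\p y}\right|_{(x^{(N)}, \phi(x))} - \left.\frac{\p f}{\p y}\right|_{(x, \phi(x))}} \leq C\, \norm{x - x^{(N)}}^{\alpha} \leq C\, \ell^{-\alpha \cdot \alpha(N)}, \]
where I have used $\norm{x - x^{(N)}} \leq \ell^{-\alpha(N)}$, which follows directly from the definition of the projections $p_j$. For the second factor, writing $\phi(x) - \phi^{(N)}(x) = \sum_{k \geq N} r_k(x)\, p_k(x)$, each summand has valuation at least $\alpha(k) - \log k$, and since this lower bound is increasing in $k$ the ultrametric gives
\[ \norm{\phi(x) - \phi^{(N)}(x)} \leq \ell^{-\alpha(N) + \log N}. \]

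Multiplying the two bounds yields $\norm{\mathrm{III}} \leq C\, \ell^{-(1+\alpha)\alpha(N) + \log N}$. The desired inequality $\norm{\mathrm{III}} \leq \ell^{-\alpha(N) - A}$ therefore reduces to
\[ \alpha \cdot \alpha(N) \geq A + \log N + \log_\ell C, \]
which holds for all $N$ beyond some threshold $N_{\mathrm{III}}$ because $\alpha(N) = N(N+1)/2$ grows quadratically while the right-hand side grows only logarithmically. There is no substantive obstacle here: the Hölder exponent and constant are uniform over the compact domain by Remark~\ref{Holder}, and the remainder of the argument is a direct consequence of the super-linear growth rate of $\alpha(N)$ that was built into the choice of the projections $p_j$. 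The only point worth double-checking is that the two Jacobian evaluation points indeed lie within the fixed compact set on which the uniform Hölder constant is valid, which holds because $\phi(R^p) \subset R^q$ and both $x$ and $x^{(N)}$ lie in $R^p$.
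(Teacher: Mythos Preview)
Your proof is correct and follows essentially the same approach as the paper: both arguments bound the Jacobian difference via the H\"older continuity of $\partial f/\partial y$ from Remark~\ref{Holder}, bound $\phi(x)-\phi^{(N)}(x)$ by $\ell^{-\alpha(N)+\log N}$, and multiply. The only cosmetic difference is that the paper repackages the H\"older estimate as $\norm{\partial f/\partial y|_{(x,\phi(x))} - \partial f/\partial y|_{(x+h,\phi(x))}} \leq \ell^{-A-\log s}$ for $\norm{h}=\ell^{-s}$ large, whereas you use the raw bound $C\,\norm{h}^{\alpha}$ directly; your version is if anything slightly cleaner, since $C\,\ell^{-\alpha\cdot\alpha(N)}$ decays far faster than the $\ell^{-A-\log N}$ the paper actually needs.
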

\begin{proof}
Pick $N_{\mathrm{III}}$ for the H\"older condition on $\frac{\partial f}{\partial y}$ discussed in Remark \ref{Holder} to guarantee that 
\[\norm{ \left. \frac{\partial f}{\partial y} \right|_{(x, \phi(x))} - \left.\frac{\partial f}{\partial y} \right|_{(x + h, \phi(x))} } \leq \ell^{-A} \ell^{-\log(s)} \]
if $\norm{h} = \ell^{-s}$ where $s \geq N_{\mathrm{III}}$. 

Then because we have the bound $\phi(x) - \phi^{(N)}(x) \leq \ell^{-\alpha(N) + \log(N)}$, we get the bound
\[ \norm{\mathrm{III}} \leq \ell^{-A} \ell^{- \log(\alpha(N) - \log(N))} \ell^{- \alpha(N) + \log(N)} \]
and by the same reasoning as in Lemma \ref{BoundII} this is no more than $\ell^{-\alpha(N) - A}$.
\end{proof}
\begin{mylem}\label{BoundIV}
There exists an integer $N_{\mathrm{IV}}$ such that for $N \geq N_{\mathrm{IV}}$, term $\mathrm{IV}$ has norm no more than $\ell^{- \alpha(N) - A}$.
\end{mylem}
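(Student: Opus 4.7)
The plan is to bound $\mathrm{IV}$ by an elementary size estimate; no smoothness beyond continuity of the first derivatives is needed, since both summands of $\mathrm{IV}$ are exact quantities (not linear approximations) and only the sizes of the factors need to be controlled.

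First I would establish boundedness of the derivative factors. Since $f$ is very strongly differentiable on the compact set $R^p \times R^q$, Remark \ref{Holder} gives that $f'$ is H\"older continuous there, hence bounded. Let $C$ be a uniform upper bound on $\norm{\frac{\p f}{\p x}(u,v)}$ and $\norm{\frac{\p f}{\p y}(u,v)}$ for $(u,v) \in R^p \times R^q$; this applies at every point $(x, \phi(x))$ with $x \in R^p$, since $\phi(R^p) \subset R^q$.

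Next I would estimate the two tails of the series expansions. By construction, $x - x^{(N+1)} = \sum_{j \geq N+1} p_j(x)$, where each $p_j(x)$ has valuation at least $\alpha(j) = j(j+1)/2$. Since $\alpha$ is increasing, the ultrametric inequality (cf.\ Remark \ref{Convergence}) gives $\norm{x - x^{(N+1)}} \leq \ell^{-\alpha(N+1)}$. Analogously, $\phi(x) - \phi^{(N+1)}(x) = \sum_{j \geq N+1} r_j(x) p_j(x)$; since $r_j(x) \in S_j$ its valuation is at least $-\log j$, so each summand has valuation at least $\alpha(j) - \log j$. This lower bound is increasing in $j$ for $j$ large, so the ultrametric inequality yields $\norm{\phi(x) - \phi^{(N+1)}(x)} \leq \ell^{-\alpha(N+1) + \log(N+1)}$.

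Combining these with the derivative bound $C$ gives $\norm{\mathrm{IV}} \leq C \cdot \ell^{-\alpha(N+1) + \log(N+1)}$. To ensure $\norm{\mathrm{IV}} \leq \ell^{-\alpha(N) - A}$, I would choose $N_{\mathrm{IV}}$ large enough that $\alpha(N+1) - \alpha(N) - \log(N+1) - \log_\ell C = (N+1) - \log(N+1) - \log_\ell C \geq A$ for all $N \geq N_{\mathrm{IV}}$; this is possible because $N+1$ grows linearly while $\log(N+1)$ grows logarithmically. There is no serious obstacle here; the argument is more elementary than Lemmas \ref{BoundI}--\ref{BoundIII} precisely because no linear approximation error enters. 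The only point requiring care is the $\log(N+1)$ loss coming from the valuations of the $r_j$ factors, which is exactly why the quadratic spacing $\alpha(j) = j(j+1)/2$ (rather than merely linear spacing) is essential for the construction of $\phi$.
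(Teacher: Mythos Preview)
Your proof is correct and follows essentially the same route as the paper. The paper defines $B$ by $\ell^{B}=\max_{(x,y)\in R^p\times R^q}\max\bigl(\norm{\partial f/\partial x},\norm{\partial f/\partial y}\bigr)$ (your $C=\ell^{B}$), observes $\alpha(N+1)-\alpha(N)=N+1$, and then chooses $N_{\mathrm{IV}}$ so that $N\geq \log(N+1)+A+B$; this is exactly your inequality $(N+1)-\log(N+1)-\log_\ell C\geq A$ up to an inconsequential off-by-one. The only cosmetic difference is that the paper bounds each tail term $r_{N+j}(x)p_{N+j}(x)$ separately before taking the supremum, whereas you observe directly that $\alpha(j)-\log j$ is increasing and read off the tail bound at $j=N+1$.
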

\begin{proof}
Let $B$ be the integer such that 
\[\ell^B = \max_{\substack{x \in R^p \\ y \in R^q}} \max \left( \norm{ \left. \frac{\p f}{ \p x}\right|_{(x,y)}}, \norm{ \left. \frac{\p f}{\p y} \right|_{(x,y)}} \right).\]

Notice that $\alpha(N + 1) = \alpha(N) + N$. Pick $N_{\mathrm{IV}}$ for which $N_{\mathrm{IV}} \geq \log(N_{\mathrm{IV}} + 1) + A + B$. For any $N > N_{\mathrm{IV}}$, it follows that this same inequality holds with $N$ in place of $N_{\mathrm{IV}}$. Let $N >  N_{\mathrm{IV}}.$ Then $p_{N+j}$ has valuation larger than $\alpha(N) + A + B + \log(N + j)$ for every $j > 0$, and by assumption $r_{N + j}$ has valuation larger than $- \log(N + j)$. Therefore, performing the multiplication gives
\[ \norm{\mathrm{IV}} \leq \ell^{- \alpha(N) - A}.\]
\end{proof}
\begin{mylem}\label{BoundV}
There exist arbitrarily large values of $N$ for which the term $\mathrm{V}$ has absolute value no more than $\ell^{- \alpha(N) - A}$.
\end{mylem}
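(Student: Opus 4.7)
The plan is to factor the two summands of $\mathrm{V}$ through a common right factor $p_N(x)$, use the uniform valuation bound on $p_N(x)$, and then exploit the density of the sequence $\{r_N\}$ in $M_{q\times p}(C(R^p))$ to force the remaining matrix factor to be small.

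First I would rewrite
\[ \mathrm{V} \;=\; \Bigl(\,\left.\tfrac{\p f}{\p y}\right|_{(x,\phi(x))} r_N(x) \;+\; \left.\tfrac{\p f}{\p x}\right|_{(x,\phi(x))}\Bigr)\, p_N(x). \]
Since every component of $p_N(x)$ has valuation at least $\alpha(N)$, we have $\|p_N(x)\|\le \ell^{-\alpha(N)}$. Taking $B$ as in Lemma \ref{BoundIV} (so that $\|\frac{\p f}{\p y}\|\le \ell^{B}$ uniformly on $R^p\times R^q$), it therefore suffices to produce arbitrarily large $N$ for which the bracketed matrix has norm at most $\ell^{-A}$ uniformly in $x$.

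Next I would identify a continuous target $M(x)$ to approximate. Because $\frac{\p f}{\p y}$ has full rank a.e.\ and $p\le n-d\le q$, we can select a right inverse and put
\[ M(x) \;=\; -\left.\tfrac{\p f}{\p y}\right|_{(x,\phi(x))}^{\dagger}\!\left.\tfrac{\p f}{\p x}\right|_{(x,\phi(x))}, \]
so that $\frac{\p f}{\p y}(x,\phi(x))\,M(x) + \frac{\p f}{\p x}(x,\phi(x)) = 0$ on the full-rank set. Since $\phi$ is continuous by the preceding lemma and since a continuous choice of right inverse exists on the open set of full-rank matrices, $M$ is a continuous $q\times p$ matrix-valued function there.

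Finally I would invoke the construction of $\{r_N\}$: every element of $\bigcup_k\Omega_k$ appears infinitely often in the sequence, and $\bigcup_k\Omega_k$ is uniformly dense in $M_{q\times p}(C(R^p))$ (locally constant functions on balls of radius $\ell^{-k}$ with values in an appropriately large $S_k$ approximate any continuous matrix function by uniform continuity on the compact set $R^p$). Hence there are arbitrarily large $N$ with $\|r_N - M\|_\infty < \ell^{-A-B}$, yielding
\[ \left\|\tfrac{\p f}{\p y}\, r_N + \tfrac{\p f}{\p x}\right\| \;=\; \left\|\tfrac{\p f}{\p y}\,(r_N - M)\right\| \;\le\; \ell^{B}\cdot\ell^{-A-B} \;=\; \ell^{-A}, \]
and therefore $\|\mathrm{V}\|\le \ell^{-A-\alpha(N)}$. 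The main obstacle is the measure-zero exceptional set on which $\frac{\p f}{\p y}$ is rank-deficient: there $M$ may fail to be bounded or to extend continuously, so the uniform density estimate could break down. This has to be handled either by restricting the approximation to a slightly shrunken compact subset (à la Luzin) on which $M$ is continuous and bounded, and absorbing the complementary contribution into the measure-zero image through the Lipschitz property of $f$ on compact sets (a consequence of very strong differentiability), or by strengthening the rank hypothesis along the graph of $\phi$; either way the argument for the lemma itself reduces to the density approximation above.
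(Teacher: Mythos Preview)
Your approach is essentially identical to the paper's: the paper also factors $\mathrm{V} = \bigl(\frac{\p f}{\p y}\,r_N + \frac{\p f}{\p x}\bigr)p_N(x)$, introduces the same bound $\ell^B$ on the derivatives, and chooses $N$ so that $r_N$ approximates $-\bigl(\frac{\p f}{\p y}\bigr)^{-1}\frac{\p f}{\p x}$ uniformly to within $\ell^{-A-B}$, yielding $\|\mathrm{V}\|\le \ell^{-A-\alpha(N)}$.

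The issue you flag at the end---that the full-rank hypothesis is only assumed almost everywhere, so the right inverse may not extend to a bounded continuous target on all of $R^p$---is a real subtlety, but the paper's own proof simply ignores it (it writes ``$\frac{\p f}{\p y}$ has a right inverse by the assumption that $\frac{\p f}{\p y}$ has full rank'' without qualification). So your proposal is not weaker than the paper's argument; if anything, you are being more scrupulous about a gap the paper leaves open. Your Luzin-type workaround is a reasonable way to patch it at the level of the overall measure-zero conclusion rather than within the lemma itself.
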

\begin{proof}
As in Lemma \ref{BoundIV}, define $B$ so that
\[\ell^B = \max_{\substack{x \in R^p \\ y \in R^q}} \max \left( \norm{ \left. \frac{\p f}{ \p x}\right|_{(x,y)}}, \norm{ \left. \frac{\p f}{\p y} \right|_{(x,y)}} \right).\]
We selected $r_n$ to be a dense sequence of $q$-by-$p$ matrices of continuous functions from $R^n$ into $K^n$. Note that $\frac{\p f}{\p y}$ has a right inverse by the assumption that $\frac{\p f}{\p y}$ has full rank and the assumption that $q \geq n - d$. Therefore, we can pick $N$ for which $r_N(x) + \left. \frac{\partial f}{\partial y}^{-1} \right|_{(x, \phi(x))} \left. \frac{\p f}{\p x} \right|_{(x, \phi(x))}$ is uniformly smaller than $\ell^{-A - B} $ as a function of $x$. $\mathrm{V}$ can be written as
\[ \mathrm{V} = \left(\left. \frac{\p f}{\p y} \right|_{(x, \phi(x))} r_N(x) + \left. \frac{\p f}{\p x} \right|_{(x, \phi(x))} \right) p_N(x).\]
$p_N(x)$ has norm no larger than $\ell^{- \alpha(N)}$ by definition of $p_N,$ and the choice of $N$ guarantees that the term multiplying $p_N(x)$ has norm no larger than $\ell^{-A}$. 
\end{proof}

Therefore, we can pick an $N$ larger than $\max( N_{\mathrm{I}}, N_{\mathrm{II}}, N_{\mathrm{III}}, N_{\mathrm{IV}})$ that satisfies the condition in Lemma \ref{BoundV}. For this value of $N$, the norm of $\mathrm{I} + \mathrm{II} + \mathrm{III} + \mathrm{IV} + \mathrm{V}$ is, by the ultrametric inequality, no more than the largest of the norms of $\mathrm{I}$, $\mathrm{II}$, $\mathrm{III}$, $\mathrm{IV}$, and $\mathrm{V}$, which is no more than $\ell^{-A - \alpha(N)}$. Therefore, the range of $f(x, \phi(x))$ is contained in balls of radius $\ell^{-A - \alpha(N)}$ centered at each point $f \left(x^{(N)}, \phi^{(N)}(x) \right)$. But the values of $x^{(N)}$ and $\phi^{(N)}(x)$ depend only on the coefficients $x_0, \ldots, x_{\alpha(N) -1}$. This means that $f(x^{(N)}, \phi^{(N)}(x))$ can only take on at most $\ell^{\alpha(N)p}$ values. So the range in of $f(x, \phi(x))$ is contained in at most $\ell^{\alpha(N)p}$ balls of radius at most $\ell^{\alpha(N) - A}$ in $K^{n-d}$. Each such ball has measure $\ell^{(\alpha(N) - A)(n-d)}$, so the total measure of the union of the balls is no more than $\ell^{(-\alpha(N) - A)(n-d)} \ell^{\alpha(N) p}$. If $n - d \geq p$, then this is no more than $\ell^{(-A(n-d))}.$ Since $A$ was arbitrary, this is sufficient to show that the range of $f(x, \phi(x))$ has measure zero.
\section{Construction of Kakeya-Like Sets}

We can use the function $\phi$ described above to construct Kakeya-like sets of the type described in Theorem \ref{LFWisewellSet}. Specifically, suppose that $f(x,y,w)$ is a function such that the cross-sections $f(x,y, \cdot)$ are measurable, and that is very strongly differentiable in the $x$ and $y$ variables, and so that the Jacobian $\frac{\p f}{\p y}$ has full rank almost everywhere. We will think of $f$ as a family of surfaces, where the $x$ and $y$ index the family of surfaces, and each surface is parameterized by the $w$ variable. We want to include one surface $\{(w, z) : z =  f(x,y,w) : w \in K^d \}$ for each $x \in K^p$. 

We claim that the set
\[ T = \{ (w, z) : z = f(x, \phi(x), w) : x \in K^p, w \in K^d \} \]
has measure zero. We know from the previous section that, for almost every $w$, the ``cross-section"  $\{f(x, \phi(x), w) : w \in K^n \}$ has measure zero. By Fubini's theorem, this implies that, if $T$ is measurable, then it must have measure zero. Therefore, the only thing that remains to be seen is that $T$ is measurable.

Note that it is enough to show that the set
\[ T^{\prime} = \{ (w, z) : z = f(x, \phi(x), w) : x \in R^p, w \in K^d \} \]
is measurable, because $T$ can be expressed as the countable union
\[ \bigcup_{a} \{ (w, z) : z = f(x + a, \phi(x + a), w) : x \in R^p, w \in K^d \} \]
for countably many translations $a$. This is useful because $R^p$ is a compact set.

We will now write $T^{\prime}$ in a way that makes its Borel measurability clear. The next thing we use is that $T^{\prime}$ can be thought of as the set of $(w,z)$ for which there exists a sequence $x_n$ in $R^p$ with $\norm{f(x_n, \phi(x_n), w) - z} < \frac{1}{n}$. This is sufficient to be in $T^{\prime}$ because the compactness of $R^p$ guarantees that a subsequence of the $x_n$ will have a limit $x$, and the continuity of the functions $f$ and $\phi$ guarantees that $\norm{f(x_n, \phi(x_n), w) - z}$ will approach zero. Furthermore, we can also impose the condition that the sequence elements $x_n$ lie in a countable dense subset $\tilde R^p$ of $R^p$. Therefore, $T^{\prime}$ can be realized as the set
\[ \bigcap_{n \in \mathbb{Z}} \bigcup_{x \in \tilde R^p} \left\{(w,z) : ||f(x, \phi(x), w) - z|| < \frac{1}{n} \right\}. \]
This set is measurable because $f(x, \phi(x), \cdot)$ is measurable for every $x$. Therefore, the set $T^{\prime}$ is measurable, and $T$ is measurable and has measure zero.

\section{Results for Discrete Valuation Rings}

The function $\phi$ defined in Section 3 turned out to be an $R^n$-valued function. This immediately proves Theorem \ref{LFWisewellFunc} in the setting of discrete valuation rings with finite residue field:
\begin{mycor}\label{DVRWisewellFunc}
Let $R$ be a discrete valuation ring with finite residue field. There is a continuous function $\phi : R^p \to R^q$ with the following property:

Let $f(x,y) : R^p \times R^q \to R^{n-d},$ where $p \leq n-d \leq q$, be a measurable function that is very strongly differentiable in the $x$ and $y$ variables and such that the Jacobian $\frac{\partial f}{\partial y}$ has full rank a.e. in $x$ and  $y$. Then the set
\[ \{ f(x,\phi(x)) : x \in R^p\} \]
has measure zero.
\end{mycor}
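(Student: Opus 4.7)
The plan is to observe that Corollary \ref{DVRWisewellFunc} follows directly from the work already done in Sections 3 and 4, with essentially no new argument required. The key preliminary observation is the one made at the end of Section 3: since each summand $r_k(x) p_k(x)$ of the defining series $\phi(x) = \sum_k r_k(x) p_k(x)$ has nonnegative valuation (the matrices $r_k$ take values in $R$ by construction, and the projections $p_k$ map $R$ into $R$), we have $\phi(R^p) \subseteq R^q$. Thus the same continuous function $\phi$ produced in Section 3 already serves as the desired map $R^p \to R^q$, and no modification of the construction is necessary.

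Next I would trace through the proof of Theorem \ref{LFWisewellFunc} in Section 4 and verify that it goes through when $f$ is only assumed to be defined on $R^p \times R^q$. Every evaluation of $f$ or of its partial derivatives in the six-term decomposition occurs at one of the points $(x, \phi(x))$, $(x^{(N)}, \phi(x))$, or $(x^{(N)}, \phi^{(N)}(x))$. Since $x \in R^p$ forces $x^{(N)} \in R^p$ and both $\phi(x), \phi^{(N)}(x) \in R^q$, every such evaluation point already lies in $R^p \times R^q$. Consequently Lemmas \ref{BoundI} through \ref{BoundV} apply verbatim: they invoke only the very strong differentiability of $f$ on the compact set $R^p \times R^q$, the induced H\"older continuity of the partial derivatives (Remark \ref{Holder}), the density of the sequence $\{r_n\}$, and the right-invertibility of $\partial f / \partial y$, which follows from the a.e.\ full-rank hypothesis combined with $q \geq n - d$. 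The concluding volume count at the end of Section 4 --- at most $\ell^{\alpha(N) p}$ landmark balls of radius $\ell^{-\alpha(N)-A}$ in $R^{n-d}$, giving total measure at most $\ell^{-A(n-d)}$ --- is identical in the present setting.

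The only point that warrants verification, and thus the main (mild) obstacle, is to confirm that the translation reduction at the start of Section 3, used to pass from a function $\hat\phi : R^p \to R^q$ to a function $\phi : K^p \to K^q$, is simply unnecessary here: the corollary's domain is already the compact set $R^p$, so no extension by translations is required, and a careful rereading of Sections 3 and 4 shows that no step secretly relies on having $f$ defined beyond $R^p \times R^q$ or on $\phi$ being $K$-valued. Once this bookkeeping is verified, letting $A \to \infty$ yields measure zero for the range of $f(x, \phi(x))$, completing the proof.
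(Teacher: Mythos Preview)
Your approach is correct and matches the paper's own proof, which is the single observation that $\phi$ is already $R^q$-valued, so Theorem \ref{LFWisewellFunc} restricts immediately to the DVR setting. One small correction to your parenthetical justification: the matrices $r_k$ do \emph{not} take values in $R$ in general (their entries lie in $S_k$, which permits valuation as low as $-\log k$); what guarantees each summand $r_k(x)p_k(x)$ lies in $R^q$ is that the valuation of $p_k(x)$, at least $\alpha(k)=k(k+1)/2$, dominates the possible negative valuation of $r_k(x)$, exactly as stated at the end of Section~3.
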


We can similarly prove a version of Theorem \ref{LFWisewellSet} for discrete valuation rings:

\begin{mycor}\label{DVRWisewellSet}
Let $f(x,y,w) : R^p \times R^q \times R^d \to R^{n-d}$ be a function such that $f(x, y, \cdot)$ is measurable for every $x$ and $y$, with the same differentiability properties in the $x$ and $y$ variables as in Theorem \ref{LFWisewellFunc}. Then the set
\[T := \{ f(x,\phi(x),w) : x \in R^q, w \in R^d \}\]
has measure zero.
\end{mycor}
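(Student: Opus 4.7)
The plan is to mirror the proof of Theorem \ref{LFWisewellSet} directly, with the simplification that everything now lives over the compact ring $R$ rather than the non-compact field $K$, so the countable translation reduction used there becomes unnecessary. The argument has three ingredients: the cross-section result from Corollary \ref{DVRWisewellFunc}, Fubini's theorem, and a measurability verification.

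First, for each fixed $w \in R^d$, define $f_w(x,y) := f(x,y,w)$. By hypothesis $f_w$ is very strongly differentiable in $x$ and $y$ with Jacobian $\p f_w / \p y$ of full rank almost everywhere, so Corollary \ref{DVRWisewellFunc} applies and the cross-section
\[T_w := \{ f(x, \phi(x), w) : x \in R^p \}\]
has Haar measure zero in $R^{n-d}$ for every $w \in R^d$. Once $T$ is shown to be measurable as a subset of $R^d \times R^{n-d}$, Fubini's theorem gives
\[|T| = \int_{R^d} |T_w|\, dw = 0,\]
completing the proof.

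The only nontrivial step is therefore to verify that $T$ is measurable. Here I would follow the argument from the proof of Theorem \ref{LFWisewellSet} essentially verbatim. Very strong differentiability in $x$ and $y$ implies joint continuity of $f$ in $(x,y)$, and $\phi$ is continuous by the lemma in Section 3, so for each fixed $w$ the map $x \mapsto f(x, \phi(x), w)$ is continuous on the compact set $R^p$. Fix a countable dense subset $\tilde R^p \subset R^p$. Then $(w,z) \in T$ if and only if for every $n \geq 1$ there exists $x \in \tilde R^p$ with $\norm{f(x, \phi(x), w) - z} < \ell^{-n}$: the ``only if'' direction follows from density and continuity in $x$, while the ``if'' direction follows from extracting a convergent subsequence via compactness of $R^p$ and passing to the limit. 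This gives the representation
\[T = \bigcap_{n \geq 1} \bigcup_{x \in \tilde R^p} \left\{ (w,z) \in R^d \times R^{n-d} : \norm{f(x, \phi(x), w) - z} < \ell^{-n} \right\},\]
and each set in the union is Borel because $f(x, \phi(x), \cdot)$ is measurable in $w$ by hypothesis and $z \mapsto \norm{f(x,\phi(x),w) - z}$ is continuous.

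The main obstacle, as in the local field case, is not smallness but measurability; once $T$ is expressed as a countable intersection of countable unions of Borel sets via the dense-subset description, Corollary \ref{DVRWisewellFunc} combined with Fubini finishes the argument. The compactness of $R^p$ is precisely what makes the dense-subset representation work, which is why no translation/union-over-a-lattice step is needed in contrast to the local field setting.
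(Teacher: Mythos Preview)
Your proposal is correct and follows essentially the same approach as the paper's proof of Theorem \ref{LFWisewellSet}, which is all the paper invokes for this corollary (``We can similarly prove\ldots''). You have also correctly identified the one simplification available in the DVR setting: since $R^p$ is already compact, the reduction from $T$ to $T'$ via a countable union of translates is unnecessary, and the dense-subset measurability argument applies directly.
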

\section{Examples}

Probably the simplest example of Theorem \ref{LFWisewellSet} and Corollary \ref{DVRWisewellSet} in action is the existence of measure-zero Kakeya sets in local fields and discrete valuation rings with finite residue field:

\begin{myex}
Given a discrete valuation ring $R$ with finite residue field, or its field of fractions $K$, applying Theorem \ref{LFWisewellSet} with $f(x,y,w) = x w - y$ gives the existence of a measure-zero set containing a line of the form $\{(w,z) : z = x w - \phi(x)\}$ for every $x \in R$ or $x \in K$.
\end{myex}
In the local field setting, the only ``direction" that has been excluded is the vertical direction $w = x$, and adding an extra line won't change the measure of the set. In the discrete valuation ring setting, we can apply the same result, interchanging the roles of $z$ and $w$, to get a measure-zero set containing a line of each direction that was ``missed" above.

In particular, this result provides an alternative construction to the one given in \cite{DH13} of a measure-zero Kakeya set in the discrete valuation ring $R = \mathbb{F}_{\ell}[[t]]$. 

Another important example, the Euclidean version of which is given in \cite{W04}, is the existence of measure-zero Nikodym-type sets where the roles of the ``direction" and ``translation" have been reversed:
\begin{myex}
Given a discrete valuation ring $R$ with finite residue field, or its field of fractions $K$, applying Theorem \ref{LFWisewellSet} with $f(x,y,w) = y w - x$ gives the existence of a measure-zero set containing a line of the form $\{(w,z) : z = \phi(x) w - x\}$ for every $x \in R$ or $x \in K$.
\end{myex}
\subsection*{Acknowledgements}

I am grateful to my supervisor Malabika Pramanik for all of her help and guidance.

This research was supported by two NSERC grants.
\bibliographystyle{myplain}
\bibliography{Formal_Power_Series_Kakeya}
\end{document}